\newcommand{\ti}{\widetilde}
\newcommand{\la}{\lambda}
\newcommand{\ga}{\gamma}
\newcommand{\ity}{\infty}
\newcommand{\C}{\mathbb{C}}
\newcommand{\R}{\mathbb{R}}
\newcommand{\N}{\mathbb{N}}
\newcommand{\F}{\mathfrak{F}}
\newcommand{\G}{\mathfrak{G}}
\numberwithin{equation}{section}
\newtheorem{theorem}{Theorem}[section]
\newtheorem{lemma}[theorem]{Lemma}
\newtheorem{corollary}[theorem]{Corollary}
\theoremstyle{remark}
\newtheorem{remark}[theorem]{Remark}
\newtheorem{example}[theorem]{Example}
\newtheorem{definition}[theorem]{Definition}
\thanks {The research work of the first  author is supported by research fellowship from Council of Scientific and Industrial Research (CSIR), New Delhi.}
\begin{document}

\title[Semigroups of transcendental functions]{the dynamics of semigroups of transcendental entire functions I}
\author[D. Kumar]{Dinesh Kumar}
\address{Department of Mathematics, University of Delhi,
Delhi--110 007, India}

\email{dinukumar680@gmail.com }

\author[S. Kumar]{Sanjay Kumar}

\address{Department of Mathematics, Deen Dayal Upadhyaya College, University of Delhi,
Delhi--110 007, India }

\email{sanjpant@gmail.com}

\begin{abstract}
We consider the dynamics associated with an arbitrary semigroup of transcendental entire functions. Fatou-Julia theory is used to investigate the dynamics of these semigroups. Several results of the dynamics associated with iteration of a transcendental entire function have been extended to  transcendental semigroup case. We also investigate the dynamics of conjugate semigroups, abelian transcendental semigroups and wandering and Baker domains of transcendental semigroups.
\end{abstract}
\keywords{Semigroup, normal family, Fatou set, periodic component}

\subjclass[2010]{37F10, 30D05}

\maketitle

\section{Introduction}\label{sec1}
A natural extension of the dynamics associated to the iteration of a complex function is the dynamics of composite of two or more such functions and this leads to the realm of semigroups of transcendental entire functions. In this direction the seminal work was done by Hinkkanen and Martin \cite{martin} related to semigroups of rational functions. In their  paper, Hinkkanen and Martin extended the classical theory of the dynamics  associated to the iteration of a rational function of one complex variable to the more general setting of an arbitrary semigroup of rational functions. Many of the results were extended to semigroup of transcendental entire functions by Poon \cite{poon1}, Zhigang \cite{zhigang} and Huang and Cheng \cite{cheng}.
A transcendental semigroup $G$ is a semigroup generated by a family of transcendental entire function $\{f_1,f_2,\ldots\}$ with the semigroup operation being functional composition. We denote the semigroup by $G=[f_1,f_2,\ldots].$ Thus each $g\in G$ is a transcendental entire function and $G$ is closed under composition. For an introduction to iteration theory of entire functions, see \cite {berg1}.\\

A family $\F$ of meromorphic functions is normal in a domain $D\subset\C$ if every sequence in $\F$ has a subsequence which converges locally uniformly in $D$ to a meromorphic function or to the constant $\ity.$ 
The set of normality or the Fatou set $F(G)$ of a transcendental semigroup $G$, is the largest open subset of $\C$ on which the family of functions in $G$ is normal. Thus $z_0\in F(G)$ if it has a neighborhood $U$ on which the family $\{g:g\in G\}$ is  normal. The Julia set $J(G)$ of $G$ is the complement of $F(G),$ that is $J(G)=\ti\C\setminus F(G).$ The semigroup generated by a single function $g$ is denoted by $[g].$ In this case we denote $F([f])$ by $F(f)$ and $J([f])$ by $J(f)$ which are the respective Fatou set and Julia set in the classical Fatou-Julia theory of iteration of a single transcendental entire function. The dynamics of a semigroup is more complicated than those of a single function. Some of the properties in the classical dynamics do not get preserved in the semigroup case. For instance $F(G)$ and $J(G)$ need not be completely invariant and $J(G)$ may not be the entire complex plane $\C$ even if $J(G)$ has an interior point, see \cite{martin}. In this paper we have generalised  the dynamics of a transcendental entire function on its Fatou set to the dynamics of semigroup of transcendental entire functions. Furthermore the dynamics of conjugate semigroups, abelian transcendental semigroups and wandering and Baker domains of transcendental semigroups  have also been investigated. 
\section{Results from dynamics of one transcendental entire function}\label{sec2}

%
%
In this section we are listing the results of complex dynamics of a transcendental entire function which we wish to extend in the context of transcendental semigroup.  For $n\in\N$ let $f^n$ denote the n-th iterate of $f.$


\begin{theorem}\cite[Theorem 3.1]{baker2}\label{sec2,thm3}
Let $f$ be a transcendental entire function and $U$ be a multiply connected component of  $F(f)$.
Then
\begin{enumerate}
\item\ $f^n\to\ity$ locally uniformly on $U;$ 
\item\ Suppose $\ga$ is a Jordan curve that is not contractible in $U.$ Then distance between $f^n(\gamma)$ and $0$ is large for all sufficiently large $n$. Also $ind_0\,f^n(\ga)>0$ for all sufficiently large $n$ and $ind_0\,f^n(\ga)\to\ity$ as $n\to\ity,$ (where $ind_{\zeta}\,\ga$ denotes the index of a curve $\ga\subset\C$ with respect to a point $\zeta$).
\end{enumerate}
\end{theorem}



\begin{theorem}\cite[Corollary]{baker2}\label{sec2,thm4}
Let $f$ be a transcendental entire function which is bounded on some curve $\Gamma$ going to $\ity.$ Then all the components of $F(f)$ are simply connected.
\end{theorem}


\begin{theorem}\cite[Theorem 1]{baker1}\label{sec2,thm5}
Let $f$ be a transcendental entire function.Then every unbounded component  of $F(f)$ is simply connected.
\end{theorem}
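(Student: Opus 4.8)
Suppose, for a contradiction, that $U$ is an unbounded component of $F(f)$ which is not simply connected; then $U$ is multiply connected, so there is a Jordan curve $\gamma\subset U$ that is not contractible in $U$, and the bounded complementary component $\mathrm{int}(\gamma)$ of $\gamma$ meets $J(f)$. By Theorem~\ref{sec2,thm3}(1), $f^n\to\ity$ locally uniformly on $U$, so $\mu_n:=\min_{z\in\gamma}|f^n(z)|\to\ity$; and by Theorem~\ref{sec2,thm3}(2), $ind_0 f^n(\gamma)>0$ for all large $n$. The plan is to pit this ``expansion'' against the unboundedness of $U$.

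Fix a large $n$ and set $D_n=\{z:|z|<\mu_n\}$, so $D_n$ is disjoint from $f^n(\gamma)$. As $D_n$ is connected, $ind_w f^n(\gamma)=ind_0 f^n(\gamma)>0$ for every $w\in D_n$, so by the argument principle $f^n$ takes each value of $D_n$ inside $\gamma$; thus $f^n(\mathrm{int}(\gamma))\supseteq D_n$, while the maximum principle gives $f^n(\overline{\mathrm{int}(\gamma)})\subseteq\{|z|\le\max_{\gamma}|f^n|\}$. Let $U_n$ be the component of $F(f)$ containing $f^n(U)$, and let $C:=\overline{\mathrm{int}(\gamma)}\setminus U$, a nonempty compact set meeting $J(f)$. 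Since $f^n(\mathrm{int}(\gamma)\cap U)\subseteq U_n$, we obtain $D_n\subseteq U_n\cup f^n(C)$, i.e. $D_n\setminus f^n(C)\subseteq U_n$. Consequently, for each large $n$, either $D_n\subseteq U_n$, or there is a point $z\in D_n\setminus U_n$, and then the component of $\C\setminus U_n$ containing $z$ is bounded (it lies inside the bounded component of $\C\setminus f^n(\gamma)$ that contains $0$), so $U_n$ is multiply connected. The first alternative can occur for only finitely many $n$: otherwise those components $U_n$, all containing $0$ and all containing $D_n$ with $\mu_n\to\ity$, would coincide (distinct Fatou components are disjoint) and their common value would be all of $\C$, giving $J(f)=\emptyset$, which is impossible for a transcendental entire function. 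Hence, for all large $n$, $U_n$ has a bounded complementary component and $D_n\setminus U_n\subseteq f^n(C)$.

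It remains to derive a contradiction using the fact that $U$ is unbounded, and this is the crux. Since $U$ is unbounded, there is an arc $\sigma$ in $U$ from $\gamma$ to a point outside $\overline{\mathrm{int}(\gamma)}$ (indeed one can take $\sigma$ to tend to $\ity$ within $U$); then $f^n(\gamma\cup\sigma)\subseteq U_n$ is connected, reaches arbitrarily large modulus for large $n$ (since $f^n\to\ity$ uniformly on compact subsets of $U$), and still separates $0$ from $\ity$ in $\C$. The hard part is to show that this ``spreading'' of $U_n$ towards $\ity$ cannot coexist with the discs $D_n$, whose radii tend to $\ity$, being forced into the bounded holes of $U_n$ through the \emph{fixed} compact core $C$; one must rule out the possibility that the missing part $D_n\setminus U_n$ retreats into $J(f)$ or into Fatou components buried inside $\mathrm{int}(\gamma)$, and it is exactly here that the unboundedness of $U$ is indispensable, since bounded multiply connected Fatou components do exist. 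Carrying this through yields $\bigcup_n U_n=\C$, hence $J(f)=\emptyset$, a contradiction. An alternative closing argument is to extract from this configuration a curve tending to $\ity$ on which some iterate $f^{n}$ is bounded; since $F(f^{n})=F(f)$, Theorem~\ref{sec2,thm4} applied to $f^{n}$ would then make every component of $F(f)$ simply connected, again contradicting the choice of $U$.
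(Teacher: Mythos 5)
Your proposal is not a proof: it is, by your own labelling, a plan, and the decisive step is left open. You correctly set up the standard covering estimate coming from Theorem \ref{sec2,thm3} (for large $n$, $f^n(\mathrm{int}(\gamma))\supseteq D_n$ with $\mu_n\to\ity$, by constancy of the winding number on $D_n$ and the argument principle), but the passage where the unboundedness of $U$ must actually be used --- which you yourself flag as ``the crux'' and ``the hard part'' --- is only described, never carried out. Ruling out that $D_n\setminus U_n$ ``retreats into $J(f)$ or into Fatou components buried inside $\mathrm{int}(\gamma)$'' is precisely the content of Baker's theorem on boundedness of multiply connected components, and you supply no argument for it; the alternative closing move (extracting an unbounded curve on which some iterate $f^n$ is bounded and then appealing to the simple-connectivity criterion for functions bounded on a curve to $\ity$) is likewise only asserted. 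As written, the argument establishes nothing beyond what Theorem \ref{sec2,thm3} already states.

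Note also that within the paper this statement is not derived from Theorem \ref{sec2,thm3} at all: it is one of the classical results of Baker quoted in Section \ref{sec2} with a reference to the literature, and inside the paper's own logical structure it is an immediate consequence of Theorem \ref{sec2,thm10} (every multiply connected component of $F(f)$ is bounded), since an unbounded component therefore cannot be multiply connected. This one-line deduction is exactly how the paper handles the semigroup analogue: Theorem \ref{sec3,thm3} is deduced directly from Theorem \ref{sec2,thm11}. If you insist on a self-contained proof rather than a citation, you are in effect reproving Baker's boundedness theorem, which is genuinely nontrivial; your sketch assembles the right ingredients but does not close the argument.
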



\begin{theorem}\cite[Corollary]{baker1}\label{sec2,thm6}
Let $f$ be a transcendental entire function. If $F(f)$ has an unbounded component $U,$ then all  components of $F(f)$ are simply connected.
\end{theorem}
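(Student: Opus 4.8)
The plan is to reduce everything to excluding multiply connected Fatou components and then to derive a contradiction from the quantitative escape provided by Theorem \ref{sec2,thm3}. By Theorem \ref{sec2,thm5} every unbounded component of $F(f)$ is simply connected, so it suffices to show that, under the hypothesis, $F(f)$ has no multiply connected component whatsoever; in particular the given unbounded component $U$ will be simply connected, and so will every other component.

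Assume, for contradiction, that $V$ is a multiply connected component of $F(f)$. Since $V$ is not simply connected, it contains a Jordan curve $\ga$ that is not contractible in $V$, so Theorem \ref{sec2,thm3} applies: for all sufficiently large $n$ we have $ind_0 f^n(\ga)>0$ and $R_n:=\mathrm{dist}(f^n(\ga),0)\to\ity$ as $n\to\ity$. The disc $\{|z|<R_n\}$ is connected, avoids $f^n(\ga)$, and contains $0$, a point of nonzero index with respect to $f^n(\ga)$; hence it lies entirely inside a bounded complementary component of $f^n(\ga)$, and in particular the \emph{unbounded} complementary component of $\C\setminus f^n(\ga)$ is disjoint from $\{|z|<R_n\}$.

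Next I would locate the curves $f^n(\ga)$. As $\ga\subset V\subset F(f)$ and $F(f)$ is forward invariant, each connected set $f^n(\ga)$ lies in a single Fatou component $V_n$. I claim $V_n\neq U$ for all large $n$. Indeed, multiply connected Fatou components are wandering, so the $V_n$ are pairwise disjoint and at most one of them can equal $U$; alternatively, if $f^n(\ga)\subset U$, then, $U$ being simply connected by Theorem \ref{sec2,thm5}, $U$ would have to contain the whole region filled by $f^n(\ga)$, hence the disc $\{|z|<R_n\}$, which is impossible for infinitely many $n$ since $R_n\to\ity$ while $J(f)\neq\emptyset$. Either way $U\cap f^n(\ga)=\emptyset$ for all large $n$, so the connected unbounded set $U$ must lie in the unbounded complementary component of $f^n(\ga)$, whence $U\cap\{|z|<R_n\}=\emptyset$.

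Letting $n\to\ity$ and using $R_n\to\ity$ now forces $U=\emptyset$, a contradiction; hence no multiply connected component exists and the statement follows from Theorem \ref{sec2,thm5}. The one place that needs real care is the claim $V_n\neq U$: one must rule out the possibility that the escaping image curves fall back into $U$ itself and thereby block the contradiction, which is exactly what the wandering property (or the size estimate above) is for. The remaining ingredients are just plane topology — the behaviour of winding numbers and of the bounded versus unbounded complementary components of a closed curve — combined with the escape statement of Theorem \ref{sec2,thm3}.
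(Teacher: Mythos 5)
The paper does not actually prove this theorem: it is listed among the background ``Results from iteration theory'' and justified only by the citation to Baker and to Hua--Yang, so there is no in-paper argument to compare yours against. Judged on its own, your proof is correct and is essentially the classical argument: a multiply connected component $V$ supplies, via Theorem \ref{sec2,thm3}, closed curves $f^n(\ga)$ with positive winding number about $0$ and $\mathrm{dist}(f^n(\ga),0)=R_n\to\ity$; the disc $\{|z|<R_n\}$, being connected, disjoint from $f^n(\ga)$, and of positive index, is swallowed by the bounded complementary components; and an unbounded connected Fatou component $U$ disjoint from $f^n(\ga)$ must sit in the unbounded complementary component, hence miss $\{|z|<R_n\}$, forcing $U=\emptyset$. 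The one genuinely delicate step is, as you say, ruling out $f^n(\ga)\subset U$. Your first justification (``the $V_n$ are pairwise disjoint'') leans on a form of the wandering property stronger than the literal Definition \ref{sec2,defn3} (which only asserts that the set of image components is infinite, not that they are pairwise distinct); one can upgrade it, but it takes an extra line. Your second justification is the watertight one: if $f^n(\ga)\subset U$ with $U$ simply connected (Theorem \ref{sec2,thm5}), then every point of nonzero index, hence all of $\{|z|<R_n\}$, lies in $U$, and if this happened for infinitely many $n$ then $U=\C$, contradicting the standard fact that $J(f)\neq\emptyset$ for a transcendental entire function --- a fact worth citing explicitly since the paper nowhere records it. With that small addition the argument is complete and self-contained modulo Theorems \ref{sec2,thm3} and \ref{sec2,thm5}.
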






\begin{theorem}\cite[Lemma 6]{baker3}\label{sec2,thm9}
For any transcendental entire function $f,$ any doubly connected component of $F(f)$ does not contain critical points of $f.$
\end{theorem}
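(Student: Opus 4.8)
The plan is to argue by contradiction: suppose the doubly connected component $U$ of $F(f)$ contains a critical point $c$ of $f$, and apply the Riemann--Hurwitz formula to $f|_U$ to force the component $U_1$ of $F(f)$ containing $f(U)$ to be simply connected, and then contradict this via Theorem~\ref{sec2,thm3}.

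First I would assemble three facts, writing $U_n$ for the component of $F(f)$ containing $f^n(U)$ (so $U_0=U$ and $f(U_n)\subset U_{n+1}$). (i) $U$ is bounded: this is Theorem~\ref{sec2,thm5}, since $U$ is not simply connected. (ii) Each $f^m\colon U\to U_m$ is proper: $f^m$ is continuous on the compact set $\overline U$, and $f^m(\partial U)\subset J(f)$ (because $\partial U\subset J(f)$ and $J(f)$ is forward invariant) is disjoint from $U_m\subset F(f)$, so the $f^m$-preimage in $U$ of any compact subset of $U_m$ is closed in $\overline U$ and hence compact; thus $f^m\colon U\to U_m$ is proper, surjective, of some finite degree, and I write $d\ge1$ for the degree of $f\colon U\to U_1$. (iii) The $U_n$ are pairwise disjoint: if $U_n=U_{n'}$ with $n<n'$, then $f^{n'-n}$ maps $U_n$ into itself while, by (ii) with $m=n$, $U_n=f^n(U)$ is bounded; hence for $z_0\in U$ the orbit $\{f^k(z_0)\}$ would eventually lie in the bounded set $U_n$, contradicting $f^k\to\ity$ on $U$ (Theorem~\ref{sec2,thm3}).

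For the main step, Riemann--Hurwitz applied to $f\colon U\to U_1$ gives $\chi(U)=d\,\chi(U_1)-R$, where $R\ge1$ is the number of critical points of $f$ in $U$ counted with multiplicity. Since $U$ is doubly connected, $\chi(U)=0$, so $\chi(U_1)=R/d>0$, which forces the plane domain $U_1$ to be simply connected. But let $\ga\subset U$ be a Jordan curve not contractible in $U$; by Theorem~\ref{sec2,thm3} we have $ind_0 f^n(\ga)\ge1$ for all large $n$, and by (iii) we may fix such an $n$ with $0\notin U_n$. If $U_1$ were simply connected, $f\circ\ga$ would be null-homotopic in $U_1$, hence $f^n\circ\ga=f^{n-1}\circ(f\circ\ga)$ would be null-homotopic in $f^{n-1}(U_1)\subset U_n\subset\C\setminus\{0\}$, forcing $ind_0 f^n(\ga)=0$ --- a contradiction. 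Therefore $U$ contains no critical point of $f$. The part I expect to be delicate is not the Riemann--Hurwitz count but the verification of its hypotheses in (ii) and (iii) --- the boundedness of $U$, the properness of $f|_U$, and the disjointness of the $U_n$; alternatively one can quote Baker's theorem that a multiply connected Fatou component of a transcendental entire function lies in an orbit of bounded multiply connected wandering domains, so that $U_1$ is itself multiply connected, $\chi(U_1)\le0$, and the identity $0=d\,\chi(U_1)-R$ with $d\ge1,\ R\ge0$ gives $R=0$ at once.
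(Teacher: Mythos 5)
Your proof is correct, but there is nothing in the paper to compare it with: Theorem~\ref{sec2,thm9} is stated in Section~\ref{sec2} as background quoted from the literature (the paper refers the reader to Baker and to Hua--Yang for proofs), so the paper contains no argument of its own. What you have written is essentially the standard proof of this classical fact, and every step checks out: boundedness of $U$ from Theorem~\ref{sec2,thm5}; properness of $f^m\colon U\to U_m$ via $f^m(\partial U)\subset J(f)$; your step (iii), which re-proves the wandering property of Theorem~\ref{sec2,thm10} for the orbit of $U$; and the Riemann--Hurwitz count played against the winding-number statement of Theorem~\ref{sec2,thm3}(2). The homotopy step is sound (a loop null-homotopic in $U_1$ has null-homotopic image under $f^{n-1}$ inside $f^{n-1}(U_1)\subset U_n$, hence zero index about $0$ once $0\notin U_n$, and pairwise disjointness of the $U_n$ guarantees $0\notin U_n$ for all but at most one $n$). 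The one point you gloss over is the hypothesis of Riemann--Hurwitz that the target $U_1$ be finitely connected, so that $\chi(U_1)$ is an integer and the identity $\chi(U)=d\,\chi(U_1)-R$ is meaningful; this does follow from properness (a proper map of finite degree from a doubly connected domain cannot increase the number of ends, so $U_1$ is at most doubly connected), but it deserves a sentence. Your closing alternative --- quoting that all forward images $U_n$ of a multiply connected component are themselves bounded and multiply connected, so $\chi(U_1)\le 0$ and $0=d\,\chi(U_1)-R$ forces $R=0$ --- is the cleanest route and is how the result is usually derived in the sources the paper cites.
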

As an application of Picard's theorem and Theorem \ref{sec2,thm6}  one obtains the following result:
\begin{theorem}\cite[p.\ 278]{baker1}\label{sec2,thmf}
Let $f$ be a transcendental entire function and $U\subset F(f)$ be a completely invariant domain. Then
\begin{enumerate}
\item\ $U$ is unbounded;
\item\ all components of $F(f)$ are simply connected.
\end{enumerate}
\end{theorem}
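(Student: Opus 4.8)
The plan is to prove each part by combining Picard's theorem with the structural result Theorem~\ref{sec2,thm6}. For part (2), once (1) is established the argument is immediate: a completely invariant unbounded component is itself an unbounded component of $F(f)$, so Theorem~\ref{sec2,thm6} applies and forces every component of $F(f)$ to be simply connected. Thus the entire content lies in proving (1), that a completely invariant Fatou component $U$ must be unbounded.

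First I would argue by contradiction: suppose $U$ is bounded. The key observation is that complete invariance means $f^{-1}(U)\subset U$, so the boundary $\partial U$ (which lies in $J(f)$) together with $U$ traps the preimage structure of $f$. More precisely, since $f$ is entire and $U$ is bounded, $\overline{U}$ is compact, and $f$ restricted to a neighborhood of $\overline U$ omits no value apart from possibly dealing with the single Picard exceptional value. Here is where Picard enters: a transcendental entire function takes every complex value, with at most one exception, infinitely often. If $U$ is bounded and completely invariant, then $\C\setminus \overline U$ is a nonempty open set (in fact a neighborhood of $\infty$) and for any $w\in \C\setminus\overline U$ we would need every preimage $f^{-1}(w)$ to lie in $\C\setminus U$; but more usefully, I would consider a value $w\in U$ and note $f^{-1}(w)\subset U$ as well. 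The contradiction comes from counting: $f$ being transcendental, choose $w\in U$ that is not the exceptional value; then $f^{-1}(w)$ is an infinite set contained in the bounded set $U$, hence has a finite accumulation point, which would force $f$ to be constant — contradiction. So $U$ is unbounded.

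The main obstacle I anticipate is making the "every value in $U$ has all its preimages in $U$" step fully rigorous, since complete invariance of $U$ as a \emph{component} needs care: $f^{-1}(U)$ is open and completely invariant but a priori could be a union of several components, and one must use that $U$ is a single component together with the definition of complete invariance ($z\in U \iff f(z)\in U$) to conclude $f^{-1}(U)=U$ exactly. Granting that, the preimage $f^{-1}(w)$ of a non-exceptional point $w\in U$ is an infinite subset of the bounded set $U$, its closure is compact, so it has an accumulation point in $\C$; by the identity theorem $f\equiv w$, contradicting that $f$ is transcendental. This completes (1), and (2) follows from Theorem~\ref{sec2,thm6} as noted above.
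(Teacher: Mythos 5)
Your proposal is correct and follows exactly the route the paper indicates: the paper presents Theorem~\ref{sec2,thmf} as ``an application of Picard's theorem and Theorem~\ref{sec2,thm6}'' (citing Baker and Hua--Yang for details), and your argument---backward invariance puts the infinite Picard preimage set $f^{-1}(w)$ of a non-exceptional $w\in U$ inside the bounded set $U$, forcing an accumulation point and hence $f\equiv w$ by the identity theorem, after which Theorem~\ref{sec2,thm6} gives part (2)---is precisely that standard argument. The only cosmetic point is that $U$ is a priori a completely invariant \emph{domain} rather than a full component, so for part (2) one should pass to the (necessarily unbounded) component of $F(f)$ containing $U$ before invoking Theorem~\ref{sec2,thm6}, exactly as the paper does in its semigroup analogue.
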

Now we state an extensively cited result  of Baker concerning multiply connected domains of normality:
\begin{theorem}\cite{baker2}\label{sec2,thm10}
Let $f$ be a transcendental entire function. Then a multiply connected component of $F(f)$ is bounded and wandering, and hence a pre-periodic component of $F(f)$ must be simply connected.
\end{theorem}
 From the classification of periodic components of $F(g)$ of a  transcendental meromorphic function $g,$ see\cite[Theorem 2.2\,(iv)]{baker4} we have: If $D\subset F(g)$ is a periodic component of period say $m,$ then $D$ is a Herman ring if it is doubly connected and there exist an analytic homeomorphism $\psi: D\to A,$ where $A$ is an annulus $A=\{z:1<|z|<r\}, r>1,$ such that $\psi\circ g^m\circ\psi^{-1}(z)=e^{2\,\pi\,i\,\alpha}z$ for some $\alpha\in\R\setminus\mathbb Q.$
As an application of Theorem \ref{sec2,thm10}, one obtains the following result:
\begin{theorem}\cite[p.\ 65]{Hua}\label{sec2,thm8}
If $f$ is a transcendental entire function, then $F(f)$ does not contain Herman rings.
\end{theorem}
The proofs of these results can be found in Baker \cite{baker1,baker2,baker3}, and  Hua and Yang \cite{Hua}. In \cite{dinesh} using approximation theory of entire functions, the authors have shown the existence of entire functions $f$ and $g$ having infinite number of domains satisfying various properties and relating it to their composition. They explored and enlarged all the maximum possible ways of the solution in comparison to the past result worked out. It would be interesting to explore such relations for transcendental semigroup $G$ and its constituent elements.
%
\section{Classification of periodic components and definitions}\label{sec5}
This section contains the classification of periodic components of $F(G)$ for a transcendental semigroup $G$ in  analogy to the work of Hinkkanen and Martin, \cite{martin} on the classification of periodic components of $F(G')$ for a rational semigroup $G'$.
We give the  classification of the dynamics of a transcendental semigroup $G$   on an invariant component $V$ of $F(G).$\\
 Let $G$ be a transcendental semigroup and let $U$ be a component of the Fatou set $F(G)$ of $G$. Denote by $U_g$  the component of $F(G)$ containing $g(U)$.  Define the \emph{stabilizer} of $U$ to be $G_U=\{g\in G:U_g=U\}$. If $G_U$ is non-empty we say $U$ is  a stable basin for $G$. Given a stable basin $U$  for $G$, it is

\begin{enumerate}
\item[(i)] attracting if $U$ is a subdomain of an attracting basin of each $g\in G_U;$
\item[(ii)] superattracting if $U$ is a subdomain of a superattracting basin of each $g\in G_U;$
\item[(iii)] parabolic if $U$ is a subdomain of a parabolic basin of each $g\in G_U;$
\item[(iv)] Siegel   if $U$ is a subdomain of a Siegel disk of each $g\in G_U;$
\item[(v)] Baker if $U$ is a subdomain of a Baker domain of each $g\in G_U.$
\end{enumerate}

It should be noted that in the classical case a stable basin is one of the above types. We now propose a definition:
\begin{definition}\label{sec5,defn1}
 If $G$ is a transcendental semigroup and $U$ is a multiply connected component of $F(G),$ define 
\[
\ti\G_U=\{g\in G: F(g)\,\text{has a multiply connected component}\, \ti U_g\supset U\}.
\]
Observe that $\ti\G_U$ is non empty (see the argument in the proof of Theorem \ref{sec3,thm11}).
 $\ti\G_U$ need not be a subsemigroup of $G$ and so we consider the subsemigroup generated by $\ti\G_U$ and throughout the paper denote it by $\ti\G_U$ itself. Then  $F(G)\subset F(\ti\G_U)$; in general equality may not hold, and it is interesting to know if $U$ can be a proper subset of a component of $F(\ti\G_U).$
\end{definition}
We next give some examples   of  transcendental semigroup $G$ whose Fatou set $F(G)$ is non empty:
\begin{example}\label{sec5,eg1}
Let $G=[e^z+\la, e^z+\la+2\pi i],$ where $\la\leq -1.$ It can be seen that $F(e^z+\la)=F(e^z+\la+2\pi i)$ as $(e^z+\la+2\pi i)^n=(e^z+\la)^n+2\pi i$ for each $n\in\N.$ Let $g=e^z+\la.$ Observe that for $p, q, r, s \in\N,\, g^p\circ (g+2\pi i)^q=g^{p+q}$ and $(g+2\pi i)^r\circ g^s=g^{r+s}+2\pi i.$ Then for any $f\in G,$ either $f$ equals $g^k,$ for some $k\in\N,$ or  $f=g^m+2\pi i,$ for some $m\in\N.$ In either of the cases, $F(g)=F(f)$ and hence $F(G)=F(e^z+\la)\neq\emptyset,$ \cite{R4}. 
\end{example}
\begin{example}\label{sec5,eg2}
Let $G=[\la\sin{z},\la\sin z+2\pi]$ where $0<|\la|<1, F(\la\sin{z})\neq\emptyset,$ \cite[Theorem 1]{pd}. On similar lines to above example, it can be seen that\,$\emptyset\neq F(\la\sin z)=F(\la\sin z+2\pi)=F(G),$ and hence $F(G)\neq\emptyset.$
\end{example}
The following definitions are well known in  transcendental semigroup theory:
\begin{definition}\label{sec5,defn2}
Let $G$ be a transcendental semigroup. A set $W$ is forward invariant under $G$ if $g(W)\subset W$ for all $g\in G$ and $W$ is backward invariant under $G$ if $g^{-1}(W)=\{w\in\C:g(w)\in W\}\subset W$ for all $g\in G.$ $W$ is called completely invariant under $G$ if it is both forward and backward invariant under $G.$ 
\end{definition}
It is easily seen for a transcendental semigroup $G,$ $F(G)$ is forward invariant and $J(G)$ is backward invariant, see \cite[Theorem 2.1]{poon1}.


\begin{definition}\label{sec5,defn3}
A component $U$ of $F(G)$ is called a wandering domain of $G$ if the set $\{U_g:g\in G\}$ is infinite (where as usual $U_g$ is the component of $F(G)$ containing $g(U)$). Otherwise $U$ is called a pre-periodic component of $F(G).$
\end{definition}
We now propose a natural definition of boundedness of a semigroup on a set which will be needed later on:
\begin{definition}\label{sec5,defn4}
A transcendental semigroup $G=[f_1,f_2,\ldots]$ is said to be bounded on a set $A$ if all the generators in $G$ are bounded on $A$.
\end{definition}
Recall that $w\in\C$ is a critical value of a transcendental entire function $f$ if there exist some $w_0\in\C$ with $f(w_0)=w$ and $f'(w_0)=0.$ Here $w_0$ is called a critical point of $f.$ The image of a critical point of $f$ is  critical value of $f.$ Also recall that $\zeta\in\C$ is an asymptotic value of a transcendental entire function $f$ if there exist a curve $\Gamma$ tending to infinity such that $f(z)\to \zeta$ as $z\to\ity$ along $\Gamma.$
We now introduce definitions of  critical point, critical value and asymptotic value of a transcendental semigroup $G$:

\begin{definition}\label{sec5,defn5}
A point $z_0\in\C$ is called a critical point of $G$ if it is a critical point of some $g\in G.$  A point $w\in\C$ is called a critical value of $G$ if it is a critical value of some $g\in G.$
\end{definition}
\begin{definition}\label{sec5,defn6}
 A point $w\in\C$ is called an asymptotic value of $G$ if it is an asymptotic  value of some $g\in G.$
\end{definition}

\section{Theorems and their proofs}\label{sec3}
We now give the generalisation of the results from classical theory of dynamics associated with the iteration of a transcendental entire function mentioned in Section \ref{sec2}, to the semigroup of transcendental entire functions. For a multiply connected component $U$ of $F(G),$ recall  Definition \ref{sec5,defn1} of the subsemigroup $\ti\G_U$ of $G$.

\begin{theorem}\label{sec3,thm1}
Let $G$ be a transcendental semigroup and $U$  a multiply connected component of $F(G).$ Then for all $g\in\ti\G_U$
\begin{enumerate}
\item\ $g^n\to\ity$ locally uniformly on $U;$
\item\ Suppose $\ga$ is a Jordan curve that is not contractible in $U.$ Then distance between $g^n(\gamma)$ and $0$ is large for all sufficiently large $n.$  Also $ind_0\,g^n(\ga)>0$ for all sufficiently large $n$ and $ind_0\,g^n(\ga)\to\ity$ as $n\to\ity$, (where $ind_{\zeta}\,\ga$ denotes the index of a curve $\ga\subset\C$ with respect to a point $\zeta$).
\end{enumerate}
\end{theorem}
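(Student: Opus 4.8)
The plan is to deduce both assertions from Baker's single-function result, Theorem~\ref{sec2,thm3}, applied to each $g\in\ti\G_U$ individually. By Definition~\ref{sec2,defn1}, for such a $g$ the set $F(g)$ has a multiply connected component $\ti U_g$ with $U\subseteq\ti U_g$; thus $g$ is a transcendental entire function carrying a multiply connected Fatou component, so Theorem~\ref{sec2,thm3} governs the behaviour of $g^n$ on $\ti U_g$, and hence on the smaller set $U$.

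For part~(1), Theorem~\ref{sec2,thm3}(1) gives $g^n\to\ity$ locally uniformly on $\ti U_g$, and since $U\subseteq\ti U_g$ this convergence persists locally uniformly on $U$. The first half of part~(2) is then immediate: $\ga$ is a compact subset of $U$, so $g^n\to\ity$ uniformly on $\ga$, i.e.\ $\min_{z\in\ga}|g^n(z)|\to\ity$, which says precisely that $g^n(\ga)$ eventually lies far from $0$. In particular $0\notin g^n(\ga)$ for all large $n$, and by the argument principle $ind_0\,g^n(\ga)$ then equals the number of zeros of $g^n$ (counted with multiplicity) in the bounded complementary component $D$ of $\ga$; so the remaining task is to show that this count is positive for large $n$ and tends to $\ity$.

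The natural way to obtain this is to apply Theorem~\ref{sec2,thm3}(2) to the triple $(g,\ti U_g,\ga)$, and for that one must first upgrade the hypothesis ``$\ga$ is not contractible in $U$'' to ``$\ga$ is not contractible in $\ti U_g$''. This is the main obstacle, because $\ti U_g$ may be strictly larger than $U$ and could in principle swallow the hole of $U$ that $\ga$ surrounds. What is available for free is that $D$ meets $J(G)$: indeed $\ga\cup D$ is connected and $\ga\subseteq U\subseteq F(G)$, so were $D\subseteq F(G)$ the connected set $\ga\cup D$ would lie in the single Fatou component $U$, contradicting that $\ga$ is not contractible in $U$. The crux is therefore to promote ``$D\cap J(G)\neq\emptyset$'' to ``$D\cap J(g)\neq\emptyset$'' — equivalently, to exhibit a bounded complementary component of $\ti U_g$ lying inside $\ga$. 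One attempts this by playing off the multiple connectivity of $U$ against that of $\ti U_g$ (each bounded complementary component of $\ti U_g$ is contained in a complementary component of $U$, and $\ti U_g$ has at least one bounded complementary component since it is multiply connected), using also that, by Theorems~\ref{sec2,thm10}--\ref{sec2,thm11}, both $U$ and $\ti U_g$ are bounded wandering domains. Once $\ga$ is known to be non-contractible in $\ti U_g$, Theorem~\ref{sec2,thm3}(2) applied to $g$ finishes the proof: $ind_0\,g^n(\ga)>0$ for all sufficiently large $n$ and $ind_0\,g^n(\ga)\to\ity$. Finally, if $g$ is an element of the subsemigroup generated by the defining functions rather than one of them, one runs the same scheme after first checking that the component of $F(g)$ containing $U$ (which exists since $F(G)\subseteq F(g)$) is again multiply connected.
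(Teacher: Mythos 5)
Your part (1) and the first half of part (2) follow the paper's route exactly: pass from $U$ to a multiply connected component $\ti U_g\supset U$ of $F(g)$ and quote Theorem \ref{sec2,thm3}; the locally uniform divergence on $U$ and hence the uniform divergence on the compact set $\ga$ are unobjectionable. For the index claims the paper offers only the sentence ``this is also evident from Theorem \ref{sec2,thm3}'', so the obstacle you isolate --- upgrading ``$\ga$ not contractible in $U$'' to ``$\ga$ not contractible in $\ti U_g$'' --- is a step the paper does not address at all. You have correctly located the weak point, but your proposal does not close it, and the repair you sketch does not work as described.

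Concretely: from the hypothesis you obtain $D\cap J(G)\neq\emptyset$ for the bounded Jordan domain $D$ interior to $\ga$, but the inclusion runs the wrong way, since $F(G)\subset F(g)$ gives $J(g)\subset J(G)$; a point of $D\cap J(G)$ may lie in $F(g)$, indeed in $\ti U_g$ itself. Your proposed fix --- use a bounded complementary component $K$ of $\ti U_g$, which exists because $\ti U_g$ is multiply connected --- only places $K$ in \emph{some} complementary component of $U$; nothing forces $K$ into the particular hole of $U$ that $\ga$ encircles, and $U$ may have several holes. If $\ti U_g$ fills in the hole surrounded by $\ga$, then $\overline{D}$ is a compact subset of $\ti U_g$, so $g^n\to\ity$ uniformly on $\overline{D}$, $g^n$ eventually has no zeros in $D$, and by your own argument-principle computation $ind_0\, g^n(\ga)=0$ for all large $n$: the asserted conclusion fails for that pair $(g,\ga)$. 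So either one must prove that this configuration cannot occur, or the statement needs the stronger hypothesis that $\ga$ is non-contractible in $\ti U_g$ (equivalently, that $D$ meets $J(g)$). The same caveat attaches to your closing sentence: for $g$ in the subsemigroup generated by $\ti\G_U$ rather than among its generators, the multiple connectivity of the component of $F(g)$ containing $U$ is asserted, not established.
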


\begin{proof}
\begin{enumerate}
\item\ We have $F(G)\subset F(g)$ for all $g\in G$. For  $g\in\ti\G_U,$ let $\ti U_g $ be a multiply connected component of $F(g)$ containing $U.$ From Theorem \ref{sec2,thm3}, $g^n\to\ity$ locally uniformly on $\ti U_g$, and hence on $U$.
\item\ Let $\ga$ be a Jordan curve that is not contractible in $U.$  As $g^n\to\ity$ locally uniformly on $U$ and $\ga$ is a compact subset of $U,$ so the distance between $g^n(\gamma)$ and $0$ is sufficiently large as $n\to\ity.$ Assume there is a subsequence $\{n_k\}$ for which $ind_0\,g^{n_k}(\ga)=0.$ Then $g^{n_k}$ does not have zeros inside $\gamma,$ for if $g^{n_k}(z_0)=0$, for some $z_0$ inside $\ga,$ we  have $0=g^{n_k}(z_0)$ lies inside $g^{n_k}(\ga),$ a contradiction to $ind_0\,g^{n_k}(\ga)=0.$ By the minimum principle, $g^{n_k}\to\ity$ inside $\ga,$ which is not possible as inside $\ga$ there are points of $J(g),$ and hence $ind_0\,g^n(\ga)>0$ for $n$ sufficiently large. Also as distance of $g^n(\ga)$ from $0$ approaches $\ity$ as $n\to\ity,$ we have $ind_0\,g^n(\ga)\to\ity$ as $n\to\ity.$\qedhere
\end{enumerate}
\end{proof}
Recall Definition \ref{sec5,defn4} of boundedness of a semigroup on a set.
\begin{theorem}\label{sec3,thm2}
Let $G$ be a transcendental semigroup which is bounded on some curve $\Gamma$ going to $\ity.$ Then all  components of $F(G)$ are simply connected.
\end{theorem}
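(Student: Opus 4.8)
The plan is to reduce the semigroup statement to the single-function result (Theorem \ref{sec2,thm4}) by exploiting the containment $F(G) \subset F(g)$ for every $g \in G$, together with the structural input from Theorem \ref{sec2,thm11} that any multiply connected component of $F(G)$ is bounded and wandering. First I would argue by contradiction: suppose $U$ is a multiply connected component of $F(G)$. Since $G = [f_1, f_2, \ldots]$ is bounded on the curve $\Gamma$ going to $\infty$, each generator $f_i$ is bounded on $\Gamma$, so by Theorem \ref{sec2,thm4} every component of $F(f_i)$ is simply connected, for each $i$. The aim is to derive a contradiction with the existence of the multiply connected $U$.

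The key step is to locate a single element $g \in G$ whose Fatou set $F(g)$ has a multiply connected component containing $U$ — i.e., to show $\ti\G_U$ is nonempty — because then simple connectivity of the components of $F(g)$ is violated. Here is where the wandering property from Theorem \ref{sec2,thm11} enters: $U$ is a wandering domain, so its forward orbit under $G$ consists of infinitely many distinct components. I would take a Jordan curve $\ga \subset U$ that is not contractible in $U$, and track its images $g(\ga)$ for $g \in G$. The point is that the obstruction to contractibility is ``topological'': if $U$ were contained in a simply connected component of $F(g)$ for a generator $g = f_i$, then $f_i(\ga)$ would be null-homotopic in that component, and one would want to propagate this back to contractibility of $\ga$ in $U$ — but this direction is delicate because $f_i(U)$ need not fill the component of $F(G)$ containing it. Instead, the cleaner route is: since each $f_i$ is bounded on $\Gamma \to \infty$, one shows (as in the proof of Theorem \ref{sec2,thm4}) that $\Gamma$, or a suitable subcurve, meets $J(f_i)$ for each $i$, hence $\Gamma$ eventually lies in $J(G) = \bigcup$-complement; more precisely the curve $\Gamma$ cannot be surrounded by any Fatou component of $G$. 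Then a multiply connected component $U$ of $F(G)$, being bounded (Theorem \ref{sec2,thm11}), has a bounded complementary component, and one argues that the non-contractible curve $\ga$ must wind around a point lying on $\Gamma$ or in $J(G)$; pushing forward by iterates $g^n$ of any $g \in \ti\G_U$ and invoking part (2) of Theorem \ref{sec3,thm1} (the index $ind_0 g^n(\ga) \to \infty$) forces $g^n(\ga)$ to wind around $0$, while boundedness of generators on $\Gamma$ traps the orbit near $\Gamma$, giving the contradiction.

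The main obstacle I anticipate is precisely establishing that $\ti\G_U \neq \emptyset$, equivalently that the multiple connectivity of $U$ in $F(G)$ is ``inherited'' by $F(g)$ for some $g \in G$: a priori $U \subset F(g)$ sits inside some component $V_g$ of $F(g)$ that could be simply connected even though $U$ is not, since $V_g$ may be strictly larger than $U$. Resolving this requires showing that for a bounded-on-$\Gamma$ semigroup one cannot enlarge a multiply connected $U$ to a simply connected Fatou component of any single generator — intuitively because filling in the ``hole'' of $U$ would force the generator to be unbounded on curves through that hole, contradicting boundedness on $\Gamma$ after one applies the maximum principle on the bounded complementary component of $U$. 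Once that is in hand, the contradiction with Theorem \ref{sec2,thm4} is immediate, so I would spend most of the write-up on this inheritance step, modeling the argument on Baker's original proof of Theorem \ref{sec2,thm4} applied componentwise and then transferred to $G$ via $F(G) = \bigcap_{g \in G} F(g)$.
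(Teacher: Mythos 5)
Your strategy coincides with the paper's: observe that every $g\in G$ is bounded on $\Gamma$ (each $g$ is a composition of generators, the first generator applied already maps $\Gamma$ onto a bounded set, and entire functions carry bounded sets to bounded sets), apply the single-function theorem quoted in Section~\ref{sec2} (an entire function bounded on a curve tending to $\ity$ has only simply connected Fatou components) to conclude that every component of every $F(g)$ is simply connected, and then contradict the existence of a multiply connected component $U$ of $F(G)$ by exhibiting some $g$ for which the component of $F(g)$ containing $U$ is multiply connected. You have also correctly isolated the crux, namely that $\ti\G_U\neq\emptyset$ must actually be proved: a priori the component of $F(g)$ containing $U$ could be simply connected even though $U$ is not. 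The paper's own proof elides exactly this point --- its ``contradiction'' is drawn from the sentence ``for all $g\in\ti\G_U$, $F(g)$ has a multiply connected component $\ti U_g\supset U$,'' which is vacuously true when $\ti\G_U=\emptyset$.

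However, neither of your suggested routes to $\ti\G_U\neq\emptyset$ works as written. Invoking part (2) of Theorem~\ref{sec3,thm1} is circular, since that theorem is a statement about elements of $\ti\G_U$ and therefore presupposes the very nonemptiness you are trying to establish; and the maximum-principle heuristic about ``filling in the hole'' is not yet an argument (there is no minimum principle, so boundedness of a generator on $\Gamma$ and on a curve in $U$ does not by itself control its behaviour inside the hole). The missing ingredient is the identity $J(G)=\overline{\bigcup_{g\in G}J(g)}$ for transcendental semigroups, proved in \cite{poon1}. Granting it: let $\ga\subset U$ be a Jordan curve not contractible in $U$ and let $D$ be its interior. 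If $D\subset F(G)$, then $D\cup U$ is a connected subset of $F(G)$ meeting $U$, so $D\subset U$ and $\ga$ would be contractible in $U$; hence $D$ meets $J(G)$, and since $D$ is open it meets $J(g)$ for some particular $g\in G$. The component of $F(g)$ containing $\ga$ (and hence $U$) therefore cannot be simply connected, so $g\in\ti\G_U$, and the contradiction with the single-function theorem is genuine. With this lemma inserted, your outline closes into a complete proof.
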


\begin{proof}
For all $g\in G, g(\Gamma)$ is bounded and so from Theorem \ref{sec2,thm4}, all components of $F(g)$ are simply connected. If $U$ is a multiply connected component of $F(G)$ then for all $g\in\ti\G_U, F(g)$ has a multiply connected component $\ti U_g\supset U$ which is a contradiction and hence the result.
\end{proof}
\begin{theorem}\label{sec3,thma}
Let $G$ be a transcendental semigroup.Then the following holds:
\begin{enumerate}
\item [(i)] Every unbounded component  of $F(G)$ is simply connected;
\item [(ii)] If $F(G)$ has an unbounded component $V,$ then all  components of $F(G)$ are simply connected;
\item [(iii)]  Any doubly connected component $U$ of $F(G)$ does not contain critical points of $\ti\G_U;$
\item [(iv)]  If $U\subset F(G)$ is a domain which is completely invariant under some $g_0\in G,$ then  $U$ is unbounded and  all components of $F(G)$ are simply connected.
\end{enumerate}
\end{theorem}
\begin{proof}
\begin{enumerate}
\item [(i)] If $F(G)$ has an unbounded multiply connected component $U,$ then for all $g\in\ti\G_U, F(g)$ will have a multiply connected component $\ti{U_g}\supset U.$ But then $U$ is contained in an unbounded component of $F(g)$ for all $g\in\ti\G_U,$ and so by Theorem \ref{sec2,thm5}, all components of $F(g)$ are simply connected for all $g\in\ti\G_U$. Thus we arrive at a contradiction and hence the result.
\item [(ii)] As $V\subset F(g)$ for all $g\in G$, there exist for each $g\in G$ an unbounded component $V_g'$ of $F(g)$ with $V\subset V_g'.$ By Theorem \ref{sec2,thm6}, all components of $F(g)$ are simply connected for all $g\in G.$ It is now evident if $F(G)$ has a multiply connected component then we arrive at a contradiction and hence the result.
\item [(iii)] As $U$ is a doubly connected component of $F(G),$ so $U$ is contained in a doubly connected component $\ti U_g$ of $F(g)$ for $g\in\ti\G_U$. From Theorem \ref{sec2,thm9}, $\ti U_g$ does not contain critical points of $g$ for each $g\in\ti\G_U$ and so does $U$. 
\item [(iv)] $U\subset F(G)\subset F(g_0)$, $U$ is completely invariant under $g_0$ and so is unbounded by Theorem \ref{sec2,thmf}. 
 There exist a component say $U'$ of $F(G)$ which contains the unbounded domain $U$ and so is itself unbounded. By Theorem \ref{sec3,thma}(ii), all components of $F(G)$ are simply connected.\qedhere
\end{enumerate}
\end{proof}

Theorem \ref{sec2,thm10} has been generalised to the semigroup case by Zhigang \cite{zhigang}. Here we give another proof of Zhigang's result. We will need the following lemma:
\begin{lemma}\cite[Theorem 4.2]{poon1}\label{sec3,lem1}
If $G$ is a transcendental semigroup, then $J(G)=\overline{\bigcup_{g\in G} J(g)}.$
\end{lemma}
\begin{theorem}\label{sec3,thm11}
Let $G$ be a transcendental semigroup. Then a multiply connected component of $F(G)$ is bounded and wandering, and hence a pre-periodic component of $F(G)$ must be simply connected.
\end{theorem}
\begin{proof}
Suppose $V\subset F(G)$ is a multiply connected component. Then from Theorem \ref{sec3,thma}(i), $V$ is bounded. Let $\gamma\subset V$ be a curve which is not contractible in $V$ and whose interior contains points of $J(G).$ We now show the existence of a $g\in G$ such that $J(g)$ intersects the bounded interior portion  of $\gamma$. Denote the bounded interior portion of $\gamma$ by $\gamma_1.$ Let $\zeta_0\in J(G)\cap\gamma_1.$ Then from Lemma \ref{sec3,lem1}, there is a sequence $\{g_j\}$ in  $G$ such that there are points $\zeta_j\in J(g_j)$ with  $\zeta_j\to\zeta_0.$ We now pick a $g_{j_0}$ from this sequence $\{g_j\}$. As $\gamma\subset V\subset F(g_{j_{0}})$ and $\gamma_1\cap J(g_{j_0})\neq \emptyset,$ $\gamma$ is not contractible with respect to $F(g_{j_0})$ and so the component $\ti{U_{g_{j_0}}}$ of $F(g_{j_0})$ which contains $V$ is multiply connected. From Theorem \ref{sec2,thm10}, $\{\ti{U_{g_{j_0}}},{g_{j_0}}^n: n\in\N\}$ is infinite, and thus $V$ is a wandering domain of $G.$
\end{proof}
As a consequence of above theorem we have the following corollary:
\begin{corollary}\label{sec3,cora}
If $G$ is a transcendental semigroup, then $F(G)$ does not contain Herman rings.
\end{corollary}
\section{Finitely generated transcendental semigroups}\label{sec4}

We now consider finitely generated transcendental semigroups. A semigroup $G=[g_1,\ldots,g_n]$ generated by finitely many functions is called finitely generated. Furthermore if $g_i$ and $g_j$ are permutable, that is $g_i\circ g_j=g_j\circ g_i,$ for all $1\leq i,j\leq n$, then $G$ is called finitely generated abelian transcendental semigroup. Poon \cite{poon1}, investigated some properties of abelian transcendental semigroups and wandering domains of transcendental semigroups. Recall the Eremenko-Lyubich class
\[\mathcal{B}=\{f:\C\to\C\,\,\text{transcendental entire}: \text{Sing}{(f^{-1})}\,\text{is bounded}\},\]
(where Sing($f^{-1}$) is the set of critical values and asymptotic values of $f$ and their finite limit points). Each $f\in\mathcal{B}$ is said to be of bounded type. A transcendental entire function $f$ is of finite type if Sing $f^{-1}$ is a finite set. Furthermore if the transcendental entire functions $f$ and $g$ are of bounded type then so is $f\circ g$ as Sing $((f\circ g)^{-1})\subset$ Sing $f^{-1}\cup f(\text{Sing}(g^{-1})),$ see \cite{berg4}. 
We now state a result of Baker which we generalise to the semigroup case:
\begin{theorem}\cite[Lemma 4.4]{baker2}\label{sec4,thmab}
If $f$ and $g$ are transcendental entire functions and if $\ity$ is not a limit function of any subsequence of ($f^n$) in a component of $F(f),$ nor of a subsequence of ($g^n$) in a component of $F(g),$ then $F(f)=F(g).$
\end{theorem}
Before we generalise this result, we prove a lemma:
\begin{lemma}\label{sec4,lemmabc}
Let $f$ and $g$ be two  transcendental  entire functions satisfying $f\circ g=g\circ f.$ Then $F(f\circ g)\subset F(f)\cap F(g).$
\end{lemma}
\begin{proof}
In \cite{berg4}, it was shown that $z\in F(f\circ g)$ if and only if $f(z)\in F(g\circ f).$ Since $f\circ g=g\circ f,\, F(f\circ g)$ is completely invariant under $f$ and by symmetry, under $g$ respectively and so in particular it is forward invariant under them. So $f(F(f\circ g))\subset F(f\circ g)$ and $g(F(f\circ g))\subset F(f\circ g),$  which by Montel's Normality Criterion implies $F(f\circ g)\subset F(f)$ and $F(f\circ g)\subset F(g)$ and hence the result. 
\end{proof}

\begin{theorem}\label{sec4,thmbc}
Let $G=[g_1,\ldots, g_n]$  be a finitely generated abelian transcendental semigroup. If $\ity$ is not a limit function of any subsequence in $G$ in a component of $F(G),$ then $F(G)=F(g)$  for all $g\in G.$
\end{theorem}
\begin{proof}
From Theorem \ref{sec4,thmab}, $F(g_i)=F(g_j)$ for $1\leq i, j\leq n.$ Using permutability of each $g_i,$ any $g\in G$ can be represented as $g=g_1^{l_1}\circ\cdots\circ g_n^{l_n}.$ Also $g$ permutes with each $g_i$. Using Lemma \ref{sec4,lemmabc}, $\ity$ is not a limit function of any subsequence of ($g^n$) in a component of $F(g)$, and hence on applying Theorem \ref{sec4,thmab} again we get  $F(g)=F(g_i)$ for each $i.$ Hence $F(G)=F(g)$ for all $g\in G.$
\end{proof}
We now state a result of Poon concerning finitely generated abelian transcendental semigroup in which the generators are of finite type:
\begin{theorem}\cite[Theorem 5.1]{poon1}\label{sec4,thm1}
If $G=[g_1,\ldots, g_n]$  is a finitely generated abelian transcendental semigroup in which each $g_i,\, i=1,\ldots,n$ is of finite type, then $F(G)=F(g)$ for all $g\in G.$
\end{theorem}
\begin{remark}\label{sec4,rema}
Even if $G$ is a non abelian transcendental semigroup, we can have $F(G)=F(g)$ for all $g\in G$. This  has been shown in Example \ref{sec5,eg1}.  
\end{remark}

\begin{remark}\label{sec4,rem1}
Theorem \ref{sec4,thm1} can be generalised to a finitely generated abelian transcendental semigroup $G$ in which each of the generators are of bounded type.
\end{remark}
We will need the following  lemma:
\begin{lemma}\cite{berg4}\label{sec4,lema}
 If  $f$ and $g$ are transcendental entire functions of bounded type then so is $f\circ g$.
\end{lemma}
We next prove another lemma:
\begin{lemma}\label{sec4,lemcd}
 If $f$ and $g$ are transcendental entire functions of bounded type with $f\circ g=g\circ f,$ then $F(f)=F(g)$.
\end{lemma}
\begin{proof}
As $f$ and $g$ are of bounded type, the forward orbits of points in $F(f)$ and $F(g)$ do not tend to $\ity$ under $f$ and $g$ respectively, see \cite[Theorem 1]{el2}. Combining this with Theorem \ref{sec4,thmab}, one gets $F(f)=F(g).$
\end{proof}
\begin{theorem}\label{sec4,thm2}
For a finitely generated abelian transcendental semigroup $G=[g_1,\ldots,g_n]$ in which each $g_i,\, i=1,\ldots,n$ is of bounded type, $F(G)=F(g)$ for all $g\in G.$
\end{theorem}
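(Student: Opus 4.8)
The plan is to leverage Theorem \ref{sec4,thm1} by reducing the bounded-type case to the finite-type case through a perturbation/limit argument, exactly as foreshadowed in Remark \ref{sec4,rem1}; alternatively, and more cleanly, one can argue directly from the pairwise-commutation fact quoted in the text just before the statement, namely that if $f$ and $g$ are transcendental entire of bounded type with $f\circ g=g\circ f$, then $F(f)=F(g)$ (see \cite{Hua1}). I would take the second route. First I would observe that since each generator $g_i$ is of bounded type and the $g_i$ pairwise commute, the cited result gives $F(g_1)=F(g_2)=\cdots=F(g_n)$; call this common Fatou set $\mathcal{F}$. The key structural point is then that \emph{every} element $g\in G$ is a composition of generators, hence (using Sing$((f\circ g)^{-1})\subset$ Sing$\,f^{-1}\cup f(\text{Sing}(g^{-1}))$ and boundedness of each piece) is itself of bounded type, and moreover $g$ commutes with each $g_i$ because $G$ is Abelian. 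Applying \cite{Hua1} once more to the commuting bounded-type pair $(g,g_i)$ yields $F(g)=F(g_i)=\mathcal{F}$ for every $g\in G$.

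It remains to identify $\mathcal{F}$ with $F(G)$. The inclusion $F(G)\subset F(g)$ for all $g\in G$ is immediate from the definition of the Fatou set of a semigroup, so $F(G)\subset\bigcap_{g\in G}F(g)=\mathcal{F}$. For the reverse inclusion I would argue as in Poon's proof of Theorem \ref{sec4,thm1}: on $\mathcal{F}=F(g_1)$ the iterates of $g_1$ form a normal family, and since all generators share the Fatou set $\mathcal{F}$ and pairwise commute, one shows that the full family $\{g:g\in G\}$ is normal on any relatively compact subdomain of $\mathcal{F}$. Concretely, an arbitrary word $g=g_{i_1}\circ\cdots\circ g_{i_k}$ maps $\mathcal{F}$ into $\mathcal{F}$, and normality of $G$ on $\mathcal{F}$ follows from a diagonal/subsequence extraction together with the fact that on each invariant Fatou component the dynamics of the commuting generators are simultaneously controlled (this is where finite generation is used: only finitely many ``building blocks'' $g_1,\dots,g_n$ occur). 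Hence $\mathcal{F}\subset F(G)$, giving $F(G)=\mathcal{F}=F(g)$ for all $g\in G$.

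The main obstacle is the last step: passing from ``all generators (and all elements) have the common Fatou set $\mathcal{F}$'' to ``the whole semigroup is normal on $\mathcal{F}$''. Having a common Fatou set for each individual element does not automatically make the family normal, since normality of $\{g:g\in G\}$ is a joint condition on all compositions simultaneously. Here one must use finite generation essentially — exploiting that every element is a finite word in $g_1,\dots,g_n$, that each $g_i$ maps $\mathcal{F}$ to itself, and that commutativity forces the components of $\mathcal{F}$ to be permuted compatibly by all generators — to run a normal-families argument (e.g. via Montel's theorem applied on each component of $\mathcal{F}$, which omits the exceptional values lying in $J(G)$). I expect this to mirror Poon's argument for the finite-type case almost verbatim, the point of the present theorem being that the only place finiteness of Sing$(g_i^{-1})$ was actually needed in \cite{poon1} was to invoke the commuting-functions Fatou-set equality, which \cite{Hua1} supplies under the weaker hypothesis of bounded type.
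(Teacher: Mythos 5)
Your proposal is correct and follows essentially the same route as the paper, whose own proof is just a one-line deferral to Theorem \ref{sec4,thm1} ``with finite type replaced by bounded type'': the intended modification is precisely the one you identify, namely substituting the Hua--Wang result that permutable bounded-type entire functions share a Fatou set, noting that every word in the generators is again of bounded type, and then recovering $\mathcal{F}\subset F(G)$ by Montel since each $g\in G$ maps $\mathcal{F}$ into itself and so omits the infinite set $\C\setminus\mathcal{F}$. Your worry about the last step is unfounded for exactly this reason, and your write-up is in fact more complete than the proof printed in the paper.
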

\begin{proof}
For each $i, j,\, 1\leq i, j\leq n,\, g_i$ and $g_j$ are permutable. From Lemma  \ref{sec4,lemcd}, $F(g_i)=F(g_j).$ Also using the permutability of each $g_i,$ any $g\in G$ can be represented as $g=g_1^{l_1}\circ\cdots\circ g_n^{l_n}$ and using Lemma  \ref{sec4,lema} repeatedly one gets $g$ is of bounded type. Also $g$ permutes with each $g_i$ and hence on applying Lemma \ref{sec4,lemcd} again, we get $F(g)=F(g_i), 1\leq i\leq n.$ Hence we conclude that $F(G)=F(g)$ for all $g\in G.$
\end{proof}
It is well known see \cite[Proposition 3]{el2}, if $f\in\mathcal{B}$ then all the components of $F(f)$  are simply connected. This follows from Theorem \ref{sec2,thm3} and the fact that if $f$ is a transcendental entire function of class $\mathcal{B}$, then the forward orbit of points in $F(f)$ does not approach $\ity$, see \cite[Theorem 1]{el2}. This result gets  generalised to semigroup case:
\begin{theorem}\label{sec4,thme}
 Let $G=[g_1,\ldots,g_n]$ be a finitely generated transcendental semigroup in which each generator is of bounded type. Then all components of $F(G)$ are simply connected.
\end{theorem}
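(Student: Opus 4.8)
The plan is to argue by contradiction, exactly mirroring the proof of Theorem \ref{sec3,thm2} but with the class $\mathcal{B}$ hypothesis replacing boundedness on a curve. First I would suppose, for contradiction, that $F(G)$ has a multiply connected component $U$. By Theorem \ref{sec2,thm11} (Zhigang's generalisation of Baker's theorem), $U$ is then bounded and wandering; in particular the subsemigroup $\ti\G_U$ of Definition \ref{sec2,defn1} is non-empty, so we may fix some $g\in\ti\G_U$, and $F(g)$ has a multiply connected component $\ti U_g\supset U$. By part (1) of Theorem \ref{sec2,thm3} applied to this single transcendental entire function $g$, we get $g^n\to\ity$ locally uniformly on $\ti U_g$, hence on $U$; so the forward orbit under iteration of $g$ of every point of $U$ tends to $\ity$.

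The key step is then to observe that $g$ itself belongs to the Eremenko--Lyubich class $\mathcal{B}$. This is precisely the closure property recorded in Section \ref{sec4}: since $g$ is a word in the generators $g_1,\dots,g_n$, each of bounded type, and $\text{Sing}((f\circ h)^{-1})\subset \text{Sing}(f^{-1})\cup f(\text{Sing}(h^{-1}))$ shows $\mathcal{B}$ is closed under composition, we have $g\in\mathcal{B}$. Now invoke the Eremenko--Lyubich result cited before the statement: if $g\in\mathcal{B}$, then the forward orbit of any point of $F(g)$ does not approach $\ity$. Since $U\subset F(G)\subset F(g)$, this directly contradicts $g^n\to\ity$ on $U$ obtained in the previous paragraph. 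Hence $F(G)$ has no multiply connected component, i.e. every component of $F(G)$ is simply connected.

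I expect the only genuine subtlety to be bookkeeping: making sure $\ti\G_U$ is non-empty and that one may legitimately pass from a component of $F(g)$ to the component $U$ of $F(G)$ sitting inside it — but this is already handled by Definition \ref{sec2,defn1} together with the inclusion $F(G)\subset F(g)$, and it is exactly the mechanism used in the proofs of Theorems \ref{sec3,thm2} and \ref{sec3,thm9}. No delicate estimates are needed; the whole argument is a concatenation of Theorem \ref{sec2,thm3}, the composition-closedness of $\mathcal{B}$, and the Eremenko--Lyubich escaping-set fact, so the ``hard part'' is essentially just verifying that $g\in\mathcal{B}$, which the paper has already set up in the paragraph preceding the theorem.
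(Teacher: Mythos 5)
Your proposal is correct and takes essentially the same route as the paper: the paper's proof likewise assumes a multiply connected component $U$ of $F(G)$, picks $g\in\ti\G_U$ so that $U$ sits inside a multiply connected component $\ti U_g$ of $F(g)$, and derives a contradiction with $g$ being of bounded type, citing as a black box the observation stated just before the theorem (class $\mathcal{B}$ functions have only simply connected Fatou components), which you merely unpack into Theorem \ref{sec2,thm3} plus the Eremenko--Lyubich escaping-orbit fact, together with the composition-closedness of $\mathcal{B}$ that guarantees $g\in\mathcal{B}$. The one caveat --- that the non-emptiness of $\ti\G_U$ does not actually follow from Theorem \ref{sec2,thm11} as you assert, but would need something like Poon's identity $J(G)=\overline{\bigcup_{g\in G}J(g)}$ to produce a $g$ whose Fatou set has a multiply connected component containing $U$ --- is a gap the paper's own one-line proof shares, so it does not separate the two arguments.
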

\begin{proof}
 From Lemma \ref{sec4,lema}, each $g\in G$ is of bounded type. If $U\subset F(G)$ is a multiply connected component, then for each $g\in \ti\G_U,$ $U\subset \ti U_g$ where $\ti U_g$ is a multiply connected component of $F(g).$ From above observation, for all $g\in G,$ all components of $F(g)$ are simply connected and thus we arrive at a contradiction and hence the result.
\end{proof}
For two  permutable transcendental entire functions $f$ and $g$ with $F(f)=F(g)$, if $W$ is a component of $F(f),$ then will $W$ be a component of $F(g)$ of the same class. This was a problem posed by Baker  which has an affirmative answer when $f$ and $g$ are of finite type, see \cite{ren}.
\begin{theorem}\cite{ren}\label{sec4,thmb}
Let $f$ and $g$ be permutable transcendental entire functions of finite type.  If $W$ is a superattractive stable domain, an attractive stable domain, a parabolic stable domain or a Siegel disk of $f,$ then $W$ is also a superattractive stable domain, an attractive stable domain, a parabolic stable domain or a Siegel disk of $g$, respectively.
\end{theorem}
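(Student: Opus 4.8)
The plan is to combine the identity $F(f)=F(g)$, valid for commuting bounded-type (in particular finite-type) functions as recalled above, with the well-known fact that an entire function maps any Fatou component onto the target component up to at most one omitted point, and then to transport through the relation $f\circ g=g\circ f$ the linearising or normalising coordinate attached to each type of stable domain.

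First I would make two reductions. Since $F(f)=F(g)$, the set $W$ is a component of $F(g)$ as well. If $W$ has period $p$ under $f$, replace $f$ by $f^{p}$: this fixes $W$, leaves $F(f)$ unchanged, and preserves the type of the component. Because $g$ commutes with $f$ it permutes the cyclic families of periodic components of $F(f)$, and a finite-type map has only finitely many non-repelling cycles of components; hence a suitable iterate $g^{q}$ maps $W$ into $W$. Passing to iterates affects neither the Fatou sets nor the types, so we may assume $f(W)=W$ and $g(W)\subset W$, with $f\circ g=g\circ f$ still in force. In particular $g$ carries each fixed point of $f$ lying in $W$ to a fixed point of $f$ lying in $W$.

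Next I would treat the attracting, superattracting and Siegel cases together. Each such $W$ contains a distinguished fixed point $z_{0}$ of $f$, and it is the \emph{only} fixed point of $f$ in $W$ (if $W$ is a Siegel disk of $f$, because $f|_{W}$ is conjugate to an irrational rotation with centre $z_{0}$; in the attracting cases, because $f^{n}\to z_{0}$ on $W$). Hence $g(z_{0})=z_{0}$, and since $z_{0}\in W\subset F(g)$ is not repelling, $\mu:=g'(z_{0})$ satisfies $|\mu|\le1$. Let $\Phi$ be the linearising coordinate of $f$ on $W$ in the Siegel case (model $w\mapsto e^{2\pi i\alpha}w$ on $\D$, $\alpha$ irrational), and the Koenigs or B\"ottcher coordinate of $f$ on $W$ in the attracting cases (model $w\mapsto\la w$ with $0<|\la|<1$, respectively $w\mapsto w^{d}$). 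A power-series comparison near $z_{0}$, using that the rotation number is not a root of unity, shows that $g$ expressed through $\Phi$ commutes with the model and is therefore again linear, respectively a power map, and this identity propagates to all of $W$. One then excludes the unwanted value of $|\mu|$ by the omitted-point fact: if in the Siegel case $|\mu|<1$, then $g(W)$ would omit a whole annulus of $W$; and if in the attracting cases $|\mu|=1$ with irrational rotation number, the Siegel disk of $g$ would be $W$ itself yet would carry $f$ as $w\mapsto\la w$, again omitting an annulus. Since moreover $\mu$ cannot be a root of unity (a transcendental entire function is not periodic), we conclude that $W$ is the Siegel disk, respectively the immediate attracting or superattracting basin, of $g$ at $z_{0}$.

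Finally, the parabolic case, which I expect to be the main obstacle. Here $z_{0}\in\partial W\subset J(f)=J(g)$ is parabolic for $f$; after a further iterate $f'(z_{0})=1$. Now $g(z_{0})$ is a fixed point of $f$ which cannot lie in $W$ (no fixed point of $f$ does), hence lies on $\partial W$; since $g'(z_{0})\neq 0$, the germ of $g$ at $z_{0}$ conjugates the germ of $f$ there onto the germ of $f$ at $g(z_{0})$, so $g(z_{0})$ is again a parabolic fixed point of $f$ with multiplier $1$. Finiteness of the parabolic fixed points of $f$ makes the $g$-orbit of $z_{0}$ finite, so after one more iterate $g(z_{0})=z_{0}$, and a comparison of leading coefficients in $f\circ g=g\circ f$ at $z_{0}$ then forces $g'(z_{0})$ to be a root of unity, hence $1$ after yet another iterate. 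In the Fatou coordinate $\Phi$ of $f$ on its parabolic basin ($\Phi\circ f=\Phi+1$), commutation makes $\Phi\circ g-\Phi$ an $f$-invariant function, hence a constant $c$, so $g$ acts as $w\mapsto w+c$ in these coordinates. The crux — the hard part of the whole theorem — is to show $\mathrm{Re}\,c>0$; this requires the Leau--Fatou flower and the compatibility of the petal structures of two commuting parabolic germs at $z_{0}$, which forces the attracting petals of $f$ into attracting petals of $g$ (and, incidentally, rules out $g|_{W}=\mathrm{id}$ and the tangential possibility $\mathrm{Re}\,c=0$). Granting this, $g^{n}\to z_{0}$ on $W$, so $W$ lies in the parabolic basin of $g$ at $z_{0}$ and, being a Fatou component, is one of its immediate basin components; hence $W$ is a parabolic stable domain of $g$, as required.
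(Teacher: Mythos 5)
First, a point of order: the paper does not actually prove this statement --- it is quoted from Ren and Li \cite{ren} and used as a black box in the proof of Theorem \ref{sec4,thma} --- so there is no in-paper proof to compare yours against; your attempt has to be judged on its own.

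As a sketch it is on the right track for the fixed-point cases (attracting, superattracting, Siegel), where the combination of $F(f)=F(g)$, the unique fixed point in $W$, the linearising/B\"ottcher coordinate, and the at-most-one-omitted-point fact does close the argument. But there are genuine gaps. (i) The reduction ``$g$ permutes the cyclic families of periodic components, hence some $g^{q}$ maps $W$ into $W$'' is not justified: commutation only gives that $g$ maps the finite set of $f$-invariant components of the four types \emph{into} itself, and a self-map of a finite set need not be a bijection, so the forward $g$-orbit of $W$ is eventually periodic but need not return to $W$. You need an extra argument here, or to reorganise so that $g$-periodicity of $W$ is a conclusion rather than a standing assumption. (ii) In the parabolic case, ``$\Phi\circ g-\Phi$ is $f$-invariant, hence a constant'' is false as stated: the quotient of an attracting petal by $f$ is a bi-infinite cylinder, which carries many non-constant invariant holomorphic functions (e.g.\ $e^{2\pi i\Phi}$); constancy requires the asymptotic expansion of the Fatou coordinate together with the expansion of $g$ at $z_{0}$. (iii) Most importantly, you explicitly concede the step $\mathrm{Re}\,c>0$ (``Granting this\ldots''), so the case you yourself identify as the crux is not proved. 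Note that the parabolic case admits a much softer argument that bypasses Fatou coordinates entirely: once $f(W)=W$ and $g(W)\subset W$, $W$ is an invariant component of $F(g)$, hence (no wandering or Baker domains for finite type) of one of the four listed types for $g$; were it attracting, superattracting or Siegel for $g$, it would contain a unique fixed point $z_{1}$ of $g$, and then $f(z_{1})$ would again be a fixed point of $g$ in $W$, forcing $f(z_{1})=z_{1}$ --- impossible, since a parabolic domain of $f$ contains no fixed point of $f$. The symmetric argument gives the converse implication between types.
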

The given result can be generalised to a finitely generated abelian transcendental semigroup $G$ in which each of the generators are of finite type.

\begin{theorem}\label{sec4,thma}
Let $G=[g_1,\ldots,g_n]$ be an abelian transcendental semigroup in which each $g_i,\, i=1,\ldots,n$ is of finite type. If $U\subset F(G)$ is a superattractive stable domain, an attractive stable domain, a parabolic stable domain or a Siegel disk of $G$, then $U$ is also a  a superattractive stable domain, an attractive stable domain, a parabolic stable domain or a Siegel disk of g respectively, for all $g\in G$.
\end{theorem}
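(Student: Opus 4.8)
The plan is to reduce Theorem \ref{sec4,thma} to the two-function statement of Theorem \ref{sec4,thmb} by exploiting the structural information we already have for finitely generated Abelian transcendental semigroups of bounded (hence finite) type. The starting point is Theorem \ref{sec4,thm1} (or Theorem \ref{sec4,thm2}): since $G$ is Abelian with each generator of finite type, we have $F(G)=F(g)$ for every $g\in G$, and in particular $F(G)=F(g_i)$ for each generator $g_i$. Consequently a component $U$ of $F(G)$ is simultaneously a component of $F(g_i)$ for every $i$; there is no ambiguity about "the component of $F(g_i)$ containing $U$" — it is $U$ itself.

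First I would fix the generator on which $U$ is assumed to be of the given class, say $U$ is a superattractive (resp. attractive, parabolic, Siegel) stable domain of $G$. By the definition of stable basin given in Section \ref{sec2}, the stabilizer $G_U=\{g\in G: U_g=U\}$ is nonempty, and by the classification of stable basins for $G$, $U$ is a subdomain of a basin of the corresponding classical type for \emph{each} $g\in G_U$. Since $F(G)=F(g)$, "subdomain of a basin" actually means $U$ \emph{is} that basin (a component of $F(g)$), so $U$ is a superattractive/attractive/parabolic stable domain or a Siegel disk of each $g\in G_U$ in the classical sense. Now I would pick a specific generator, say $g_1$; if $g_1\in G_U$ we already know $U$ has the asserted type for $g_1$, and then for an arbitrary $g\in G$, since $g_1\circ g=g\circ g_1$ and both are of finite type, Theorem \ref{sec4,thmb} applied to the commuting pair $(g_1,g)$ transfers the classification of $U$ from $g_1$ to $g$. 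If instead $g_1\notin G_U$, I would first transport the type from some $h\in G_U$ to $g_1$ via Theorem \ref{sec4,thmb} applied to $(h,g_1)$ — legitimate because all elements of $G$ commute and are of finite type — and then proceed as before. Either way, iterating this pairwise transfer (or applying it once from any fixed element of $G_U$ to the target $g$) yields that $U$ is of the same class for all $g\in G$.

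The one genuine point that needs care — and which I expect to be the main obstacle — is the compatibility of the \emph{semigroup} classification of $U$ (as an attracting/parabolic/Siegel basin "for $G$" in the sense of Section \ref{sec2}) with the \emph{classical} classification of $U$ as a periodic component of a single $F(g)$. Theorem \ref{sec4,thmb} is a statement about classical stable domains of individual functions, so before invoking it I must be sure that "$U$ is a Siegel disk of $G$" really does force "$U$ is a classical Siegel disk of $g$" for at least one $g$ we can feed into Theorem \ref{sec4,thmb}; this uses the equality $F(G)=F(g)$ (so that $U$ is a full component, not merely a subdomain of one) together with the definition of the five types of stable basin. Once that identification is pinned down, the commutativity hypothesis makes the transfer across all of $G$ immediate, and the finite-type hypothesis on every generator guarantees every $g\in G$ is itself of finite type (finite type is preserved under composition, since $\mathrm{Sing}((f\circ g)^{-1})\subset \mathrm{Sing}(f^{-1})\cup f(\mathrm{Sing}(g^{-1}))$ is finite when both singular sets are), so Theorem \ref{sec4,thmb} is applicable to every commuting pair drawn from $G$. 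A brief remark should also note that a wandering or preperiodic-but-not-stable component cannot be of any of the four listed types, so no generality is lost by working within $G_U\neq\emptyset$.
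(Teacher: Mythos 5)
Your proposal is correct and follows essentially the same route as the paper: handle $g\in G_U$ directly via Theorems \ref{sec4,thm1} and \ref{sec4,thmb}, then for $g\notin G_U$ pick $f\in G_U$ and transfer the classification across the commuting pair $(f,g)$ using Theorem \ref{sec4,thmb}. Your additional care about identifying the semigroup-level classification of $U$ with the classical one (via $F(G)=F(g)$) and about every element of $G$ being of finite type makes explicit two points the paper's terse proof leaves implicit.
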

\begin{proof}
Observe that if $U\subset F(G)$ is a stable basin, then $G=G_U.$ From classification of periodic components of $F(G)$ in Section \ref{sec5}, $U$ is a stable basin of $F(g)$ of the same type for each $g\in G_U=G$ and hence the result.
\end{proof}
The following result provides a condition under which $F(f)$ of a transcendental entire function $f$ does not contain any asymptotic values of $f$:
\begin{theorem}\cite[p.\ 72]{Hua}\label{sec4,thmd}
If $f$ is a transcendental entire function whose stable domains are bounded, then $F(f)$ does not contain any asymptotic values of  $f.$
\end{theorem}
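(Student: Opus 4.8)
The plan is to argue by contradiction, using only two standard facts: the complete invariance of the Fatou set of a transcendental entire function (so that $f^{-1}(F(f))=F(f)$), and the definition of a finite asymptotic value $a$ of $f$, namely that there is an \emph{asymptotic path} $\ga:[0,\ity)\to\C$ with $|\ga(t)|\to\ity$ and $f(\ga(t))\to a$ as $t\to\ity$. Here ``stable domain'' is read in the classical sense as a connected component of $F(f)$, so the hypothesis is that every component of $F(f)$ is bounded, and the claim is that then no finite asymptotic value of $f$ can lie in $F(f)$.

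First I would suppose, for contradiction, that $F(f)$ contains a finite asymptotic value $a$. Since $F(f)$ is open, I choose $r>0$ with $D(a,r)\subset F(f)$ and fix an asymptotic path $\ga$ for $a$. Since $f(\ga(t))\to a$, there is $t_0\geq 0$ with $f(\ga(t))\in D(a,r)$ for all $t\geq t_0$, whence $\ga(t)\in f^{-1}(D(a,r))\subset f^{-1}(F(f))=F(f)$ for all such $t$. Thus the tail $\ga([t_0,\ity))$ is a connected subset of the open set $F(f)$, so it is contained in a single component $W$ of $F(f)$; and since $|\ga(t)|\to\ity$, this tail, and hence $W$, is unbounded. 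This contradicts the hypothesis that every component of $F(f)$ is bounded, so $F(f)$ contains no asymptotic value of $f$.

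The argument is short, and there is no serious obstacle, but two points deserve care. The first is that some tail of the asymptotic path genuinely lies in $F(f)$: this is exactly where the \emph{backward} invariance $f^{-1}(F(f))\subset F(f)$ is used, the forward invariance recorded in Definition \ref{sec2,defn2} being insufficient for this step. The second is the passage from ``the tail lies in $F(f)$'' to ``the tail lies in one component, which is then unbounded'', which rests on the connectedness of the tail together with $|\ga(t)|\to\ity$. In essence the theorem records the simple observation that an asymptotic value sitting in $F(f)$ drags an entire tail of its asymptotic path into a single, necessarily unbounded, component of $F(f)$; the contrapositive is the statement that a transcendental entire function with a finite asymptotic value in its Fatou set must possess an unbounded stable domain.
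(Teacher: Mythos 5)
Your argument is correct: the tail of the asymptotic path is eventually mapped into a disk around $a$ inside $F(f)$, so by the complete invariance of $F(f)$ under a single transcendental entire function the tail lies in $F(f)$, and being connected and unbounded it forces an unbounded component, contradicting the hypothesis. The paper itself gives no proof of this statement -- it is quoted from Hua and Yang -- but your argument is the standard one for this fact and there is no gap; your remark that forward invariance alone would not suffice, and that the backward invariance $f^{-1}(F(f))\subset F(f)$ is the essential ingredient, is exactly the right point to flag.
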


We now provide a condition under which $F(G)$ of a transcendental semigroup $G$  does not contain  any asymptotic values of $G.$
\begin{theorem}\label{sec4,thmc}
Let $G=[g_1,\ldots,g_n]$ be an abelian transcendental semigroup. If all stable domains of $F(g_i),\,1\leq i\leq n,$ are bounded, then $F(G)$ does not contain any asymptotic values of $G.$
\end{theorem}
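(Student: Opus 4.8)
The plan is to argue by contradiction, first reducing an asymptotic value of an arbitrary element of $G$ to an asymptotic value of a single generator. Recall (in the spirit of Definition \ref{sec3,defn3}) that $w$ is an asymptotic value of $G$ exactly when it is an asymptotic value of some $g\in G$. So suppose, for contradiction, that some asymptotic value $w$ of $G$ lies in $F(G)$; pick $g\in G$ having $w$ as an asymptotic value and write $g=g_{i_1}\circ g_{i_2}\circ\cdots\circ g_{i_k}$ as a word in the generators. Following an asymptotic path $\gamma(t)\to\ity$ with $g(\gamma(t))\to w$ and inspecting where the images $(g_{i_2}\circ\cdots\circ g_{i_k})(\gamma(t))$ accumulate — they either run off to $\ity$, making $w$ an asymptotic value of $g_{i_1}$, or cluster at a finite point which is then an asymptotic value of $g_{i_2}\circ\cdots\circ g_{i_k}$ carried by $g_{i_1}$ to $w$ — and iterating, one obtains
\[
w=(g_{i_1}\circ\cdots\circ g_{i_{j-1}})(a)
\]
for some $1\le j\le k$ and some asymptotic value $a$ of the single generator $g_{i_j}$ (for $j=1$ this reads simply $w=a$). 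This is the composition estimate $\mathrm{Sing}((f\circ h)^{-1})\subset\mathrm{Sing}(f^{-1})\cup f(\mathrm{Sing}(h^{-1}))$ recalled in Section \ref{sec4}, refined so as to keep track of asymptotic (not merely singular) values, which matters because $F(g_{i_j})$ may well contain critical values of $g_{i_j}$.

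Next I would invoke the hypothesis. Since every stable domain of $F(g_{i_j})$ is bounded, Theorem \ref{sec4,thmd} applied to $g_{i_j}$ shows $F(g_{i_j})$ contains no asymptotic value of $g_{i_j}$, so $a\in J(g_{i_j})$. Because $g_{i_j}\in G$, the subsemigroup $[g_{i_j}]$ is contained in $G$, hence $F(G)\subset F(g_{i_j})$ and therefore $J(g_{i_j})\subset J(G)$; thus $a\in J(G)$. If $j=1$ this already finishes the proof: $w=a\in J(G)$ contradicts $w\in F(G)$.

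The genuine obstacle is the case $j\ge 2$, where $w=\phi(a)$ with $\phi:=g_{i_1}\circ\cdots\circ g_{i_{j-1}}\in G$ and we only know $a\in J(G)$: since $J(G)$ is backward invariant but in general not forward invariant, $a\in J(G)$ does not by itself force $w=\phi(a)\in J(G)$. To bridge this I would work in the setting for which Lemma \ref{sec4,lem1} and the facts recalled at the start of Section \ref{sec4} are available, namely with the generators permutable and of bounded type: then $\phi$ is again of bounded type and commutes with $g_{i_j}$, so $F(\phi)=F(g_{i_j})$, hence $J(\phi)=J(g_{i_j})$; since the Julia set of a single transcendental entire function is completely invariant, $\phi(J(\phi))\subset J(\phi)$, and consequently
\[
w=\phi(a)\in\phi\big(J(g_{i_j})\big)=\phi\big(J(\phi)\big)\subset J(\phi)=J(g_{i_j})\subset J(G),
\]
again contradicting $w\in F(G)$. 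I expect this last step — transporting the ``bad'' point $a$ forward through the prefix $\phi$ while staying inside the Julia set — to be the crux of the argument, and the place where either permutability/bounded-type information on the generators, or else a direct argument that the stable domains of the composite $g$ itself are bounded (so that Theorem \ref{sec4,thmd} may be applied to $g$ at once), has to be brought in.
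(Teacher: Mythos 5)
Your proposal does not close, and the gap you flag at the end is real: in the case $j\ge 2$ you have $w=\phi(a)$ with $\phi=g_{i_1}\circ\cdots\circ g_{i_{j-1}}\in G$ and $a\in J(g_{i_j})\subset J(G)$, but since $J(G)$ is only backward invariant you cannot push $a$ forward through $\phi$ and stay in the Julia set. The only ways you see to finish --- assuming the generators are permutable and of bounded type, or proving directly that the stable domains of the composite $g$ are bounded --- both require hypotheses or lemmas that are not in the statement of the theorem. So as written the argument establishes the conclusion only when the asymptotic value in question is already an asymptotic value of one of the generators (the case $j=1$); the reduction via $\mathrm{Sing}((f\circ h)^{-1})\subset\mathrm{Sing}(f^{-1})\cup f(\mathrm{Sing}(h^{-1}))$ in your first paragraph is sound, but it delivers the point $a$ attached to the wrong function.

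For comparison, the paper's own proof is much shorter and sidesteps your obstruction entirely: it takes the element $g\in G$ of which $z_0$ is an asymptotic value, asserts that ``by hypothesis for all $g\in G$ all stable domains of $F(g)$ are bounded,'' and applies Theorem \ref{sec4,thmd} directly to that $g$, concluding $z_0\notin F(g)\supset F(G)$. But the stated hypothesis only bounds the stable domains of the \emph{generators} $g_1,\ldots,g_n$, not of arbitrary words in them, and the paper gives no argument that boundedness of stable domains is preserved under composition. In other words, the paper's proof resolves your $j\ge 2$ difficulty by silently strengthening the hypothesis --- exactly the second escape route you mention (``a direct argument that the stable domains of the composite $g$ itself are bounded''). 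If that implication were supplied, your decomposition into generators would be unnecessary and the direct argument would be complete; without it, neither your argument nor the paper's proves the theorem as stated, and your write-up has the merit of locating precisely where the missing step lives.
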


\begin{proof}
From Lemma \ref{sec4,lemmabc}, all stable domains of $F(g)$ are bounded for all $g\in G.$ Let $z_0\in F(G)$ be an asymptotic value of $G.$ Then $z_0\in F(g)$ for all $g\in G$. From Definition \ref{sec5,defn6}, $z_0$ is an asymptotic value of some $g\in G$ and thus we arrive at a contradiction by Theorem \ref{sec4,thmd} and hence the result.
\end{proof}
We now study conjugate semigroups. Recall two entire functions $f$ and $g$ are conjugate if there exist a conformal map $\phi:\C\to\C$ with $\phi\circ f=g\circ\phi.$ By a conformal map $\phi:\C\to\C$ we mean an analytic and univalent map of the complex plane $\C.$ 
\begin{definition}\label{sec4,defn1}
Two finitely generated semigroups $G$ and $G'$  are said to be conjugate under a conformal map $\phi:\C\to\C$ if 
\begin{enumerate}
\item\ they have same number of generators,
\item\ corresponding generators are conjugate under $\phi.$
\end{enumerate}
If $G=[g_1,\ldots,g_n],$ we represent the conjugate semigroup $G'$ of $G$ by $G'=[\phi\circ g_1\circ\phi^{-1},\ldots,\phi\circ g_n\circ\phi^{-1}],$ where $\phi:\C\to\C$ is the conjugating map. Furthermore if $G$ is abelian and each of its generators $g_i, 1\leq i\leq n,$ is of finite type, then so is $G'.$
\end{definition}
If $f$ and $g$ are two rational functions which are conjugate under some Mobius transformation $\phi:\ti\C\to\ti\C$, then it is well known \cite[p.\ 50]{beardon}, $\phi(F(f))=F(g).$ We now show the dynamics of $G$ and $G'$ are similar for a finitely generated abelian transcendental semigroup in which each generator is of finite type. We will use Theorem \ref{sec4,thm1}.
\begin{theorem}\label{sec4,thm3}
Let $G=[g_1,\ldots, g_n]$ be a finitely generated abelian transcendental semigroup in which each $g_i, 1\leq i\leq n$ is of finite type and let $G'=[\phi\circ g_1\circ\phi^{-1},\ldots,\phi\circ g_n\circ\phi^{-1}]$ be the conjugate semigroup, where $\phi:\C\to\C$ is the conjugating map. Then $\phi(F(G))=F(G').$
\end{theorem}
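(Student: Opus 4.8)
The plan is to reduce the identity for the semigroups $G$ and $G'$ to the corresponding identity for a single generator, where conjugacy behaviour is classical. The main tool is Theorem~\ref{sec4,thm1}: since $G$ is a finitely generated Abelian transcendental semigroup all of whose generators are of finite type, $F(G)=F(g)$ for every $g\in G$, in particular $F(G)=F(g_1)$. If $G'$ can be shown to satisfy the same hypotheses, then likewise $F(G')=F(\phi\circ g_1\circ\phi^{-1})$, and the theorem reduces to the single-function statement $\phi(F(g_1))=F(\phi\circ g_1\circ\phi^{-1})$.

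First I would note that $\phi$, being a Mobius map with $\phi(\C)=\C$, necessarily fixes $\ity$ and is therefore affine, $\phi(z)=az+b$ with $a\neq0$; hence each $\ti g_i:=\phi\circ g_i\circ\phi^{-1}$ is again a transcendental entire function, so $G'$ is a genuine finitely generated transcendental semigroup. Next I would check that $G'$ inherits the hypotheses of Theorem~\ref{sec4,thm1}. The Abelian property transfers directly, since $\ti g_i\circ\ti g_j=\phi\circ(g_i\circ g_j)\circ\phi^{-1}=\phi\circ(g_j\circ g_i)\circ\phi^{-1}=\ti g_j\circ\ti g_i$ because $G$ is Abelian. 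The finite-type property transfers because $\mathrm{Sing}(\ti g_i^{-1})=\phi\bigl(\mathrm{Sing}(g_i^{-1})\bigr)$ and $\phi$ is a bijection, so $\mathrm{Sing}(\ti g_i^{-1})$ is finite whenever $\mathrm{Sing}(g_i^{-1})$ is. Thus Theorem~\ref{sec4,thm1} applies to $G'$ and yields $F(G')=F(\ti g_1)$.

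Finally I would invoke the classical fact that for a single transcendental entire function $h$ and a Mobius (here affine) conjugacy $\phi$ one has $\phi(F(h))=F(\phi\circ h\circ\phi^{-1})$. This holds because normality of a family on a domain is preserved by pre- and post-composition with the conformal homeomorphism $\phi$ of $\C$, and $(\phi\circ h\circ\phi^{-1})^n=\phi\circ h^n\circ\phi^{-1}$, so $\phi$ carries the largest domain on which $\{h^n\}$ is normal onto the largest domain on which $\{(\phi\circ h\circ\phi^{-1})^n\}$ is normal. Applying this with $h=g_1$ and chaining the three equalities gives $\phi(F(G))=\phi(F(g_1))=F(\ti g_1)=F(G')$, as claimed.

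The only steps needing care are the verification that $G'$ is again a finitely generated Abelian semigroup with finite-type generators --- since the whole argument hinges on being allowed to collapse $F(G')$ down to $F(\ti g_1)$ via Theorem~\ref{sec4,thm1} --- and a clean statement of the single-function conjugacy identity; neither presents a genuine analytic difficulty, so I expect no serious obstacle.
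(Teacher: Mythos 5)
Your proposal is correct and follows essentially the same route as the paper: reduce $F(G)$ and $F(G')$ to the Fatou set of a single generator via Theorem~\ref{sec4,thm1} (after checking, as the paper notes in Remark~\ref{sec4,rem2}, that $G'$ is again Abelian with finite-type generators), and then apply the classical single-function conjugacy identity $\phi(F(h))=F(\phi\circ h\circ\phi^{-1})$. You have simply supplied the details (affinity of $\phi$, transfer of the singular values) that the paper's one-line proof leaves implicit.
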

\begin{proof}
Denote for each $i, 1\leq i\leq n,\,\phi\circ g_i\circ\phi^{-1}$ by $g_i'.$ From Theorem \ref{sec4,thm1}, $F(G)=F(g_i), 1\leq i\leq n.$ Also as $G'$ is abelian and each of its generators is of finite type, on applying Theorem \ref{sec4,thm1} again we have  $\phi(F(g_i))=F(g_i'), 1\leq i\leq n.$ Thus
\begin{equation}
\begin{split}
\notag
\phi(F(G))
&=\phi(F(g_i))\\
&=F(g_i')\\
&=F(G').\qedhere
\end{split}
\end{equation}
\end{proof}


\begin{remark}\label{sec4,rem3}
The above theorem can be generalised to a finitely generated abelian transcendental semigroup $G$ in which each of the generators are of bounded type.
\end{remark}
As a consequence of Theorems \ref{sec4,thm1} and \ref{sec4,thma} the following corollary is immediate:

\begin{theorem}\label{sec4,thm4}
Suppose $G=[g_1,\ldots, g_n]$ is a finitely generated abelian transcendental semigroup in which each $g_i, 1\leq i\leq n$ is of finite type. Then $G$ has no wandering domains.
\end{theorem}
To prove the next result we will need the following lemma:
\begin{lemma}\cite{el2}\label{sec4,lem2}
If $f$ is a transcendental entire function of bounded type, then $f$ has no Baker domains.
\end{lemma}
\begin{theorem}\label{sec4,thm5}
Suppose $G=[g_1,\ldots, g_n]$ is a finitely generated  transcendental semigroup in which each $g_i, 1\leq i\leq n$ is of bounded type. Then $G$ has no Baker domains.
\end{theorem}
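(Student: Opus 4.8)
The plan is to argue by contradiction: assuming $G$ has a Baker domain, I would produce an element $g\in G$ of bounded type which has a point of $F(g)$ whose forward orbit under $g$ tends to $\ity$, contradicting the Eremenko--Lyubich property recorded just before Theorem \ref{sec4,thme}. The two facts I would lean on are both already available in the paper: first, the class $\mathcal{B}$ is closed under composition, via $\mathrm{Sing}((f\circ g)^{-1})\subset\mathrm{Sing}(f^{-1})\cup f(\mathrm{Sing}(g^{-1}))$; and second, if $f\in\mathcal{B}$ then the forward orbit under $f$ of any point of $F(f)$ does not approach $\ity$ (cited from \cite{el2}).

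Concretely, I would proceed as follows. Since $G=[g_1,\dots,g_n]$ and each generator $g_i$ is of bounded type, every $g\in G$ is a finite composition of the $g_i$ and is therefore itself of bounded type; in particular every element of any stabilizer $G_U$ is in $\mathcal{B}$. Now suppose $U$ is a Baker domain of $G$. By the definition of a Baker (stable) basin, $G_U\neq\emptyset$, and for each $g\in G_U$ the component $U$ is a subdomain of a Baker domain $V_g$ of $g$; since the iterates of $g$ tend to $\ity$ locally uniformly on $V_g$, they do so on $U$ as well. On the other hand, $U\subset F(G)\subset F(g)$ because $F(G)$ is contained in $F(g)$ for every $g\in G$. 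Thus for any fixed $g\in G_U$ the forward orbit under $g$ of every point of $U$ lies in $F(g)$ yet tends to $\ity$, which contradicts the Eremenko--Lyubich result applied to $g\in\mathcal{B}$. Hence $G$ has no Baker domain.

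I do not expect a genuine obstacle here; the argument is essentially bookkeeping layered on top of results already quoted. The only points needing a line of care are that $G_U$ is nonempty whenever $U$ is a Baker basin (this is built into the definition of a stable basin) and that "$U$ is a subdomain of a Baker domain of $g$" legitimately yields $g^n\to\ity$ locally uniformly on $U$, which is just the defining property of Baker domains together with the fact that a subdomain inherits local uniform convergence. It is perhaps worth remarking that finite generation is not really used beyond guaranteeing that every element of $G$ is a finite word in the generators, so the conclusion persists for any transcendental semigroup all of whose generators are of bounded type.
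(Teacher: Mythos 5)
Your argument is correct and follows essentially the same route as the paper: both reduce to the fact that every element of $G$ is of bounded type (closure of $\mathcal{B}$ under composition) and that the classification of stable basins places $U$ inside a Baker domain of some $g\in G_U$, which is impossible for $g\in\mathcal{B}$. The only difference is that you explicitly derive the nonexistence of Baker domains for bounded-type maps from the Eremenko--Lyubich orbit property, where the paper simply cites that fact.
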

\begin{proof}
Suppose $U\subset F(G)$ is a Baker domain. From classification of periodic components of $F(G)$  in Section \ref{sec5}, for all $g\in G_U,  U$ is a subdomain of a Baker domain contained in $F(g).$ As each $g\in G$ is of bounded type, by Lemma \ref{sec4,lem2}, $F(g)$ has no Baker domains and so we arrive at a contradiction and hence the result.
\end{proof}
We now provide a condition under which $F(G)$ of a finitely generated transcendental semigroup $G$ in which each of the generators are of bounded type, contains no wandering domains of $G$. For any set $E,$ by $E'$ we denote the set of finite limit points of $E$.  We will need the following results:
\begin{theorem}\cite[Theorem 1]{cheng}\label{sec4,thml}
Let $G$ be a finitely generated transcendental semigroup. If $U$ is a wandering domain of $G$, then any limit function of $G$ on $U$ is either $\ity,$ or lies in $J(G)\cap (\cup_{f\in G}\,Sing f^{-1})'.$
\end{theorem}
\begin{theorem}\cite[Theorem 3]{cheng}\label{sec4,thmm}
Let $G$ be a finitely generated transcendental semigroup in which each generator is of bounded type. Then for all $z\in F(G),$ there does not exist any sequence $\{g_n\}$ in $G$ for which $g_n(z)\to\ity$ as $n\to\ity.$
\end{theorem}
\begin{theorem}\label{sec4,thmn}
Suppose $G=[g_1,\ldots, g_n]$ is a finitely generated  transcendental semigroup in which each $g_i, 1\leq i\leq n$ is of bounded type. If $J(G)\cap (\cup_{f\in G}\,\text{Sing} f^{-1})'=\emptyset,$ then $G$ has no wandering domains.
\end{theorem}
\begin{proof}
If $D\subset F(G)$ is a wandering domain of $G,$ then from Theorem \ref{sec4,thml}, any limit function of $G$ on $D$ is either $\ity,$ or lies in $J(G)\cap (\cup_{f\in G}\,\text{Sing} f^{-1})'$. From Theorem \ref{sec4,thmm}, the limit functions cannot be $\ity,$ and hence all the limit functions are contained in $J(G)\cap (\cup_{f\in G}\,\text{Sing} f^{-1})'=\emptyset,$ which is a contradiction and hence the result.
\end{proof}
 
Acknowledgement. We are thankful to Prof. Kaushal Verma, IISc Bangalore for his valuable comments and suggestions. We also thank the reviewer for his/her valuable comments to improve the readability of the paper.

\end{document}